\shorttitle{On extremes of random clusters and marked renewal cluster processes} 
\newcommand{\dsum}{\displaystyle\sum}
\newcommand{\dint}{\displaystyle\int}
\newcommand{\cid}{\stackrel{d}{\longrightarrow}}
\newcommand{\cip}{\stackrel{P}{\longrightarrow}}
\newcommand{\cas}{\stackrel{as}{\longrightarrow}}
\newcommand{\toi}{\to\infty}
\newcommand{\eind}{\stackrel{d}{=}}
\newcommand{\Exp}{\mathbb{E}}
\newcommand{\vep}{\varepsilon}
\newcommand{\taui}{\Gamma_i}
\newcommand{\taun}{\Gamma_n}
\newcommand{\restr}[1]{|_{#1}}
\providecommand{\customgenericname}{}
\newcommand{\newcustomtheorem}[2]{%
  \newenvironment{#1}[1]
  {%
   \renewcommand\customgenericname{#2}%
   \renewcommand\theinnercustomgeneric{##1}%
   \innercustomgeneric
  }
  {\endinnercustomgeneric}
}
\numberwithin{equation}{section}
\newtheorem{theo}{Theorem}[section]
\newtheorem{lemm}[theo]{Lemma}
\newtheorem{coro}[theo]{Corollary}
\newtheorem{examp}[theo]{Example}
\newtheorem{propo}[theo]{Proposition}
\newtheorem{rema}[theo]{Remark}
\newcommand{\BB}{\mathcal{B}}
\newcommand{\NN}{\mathbb{N}}
\newcommand{\PP}{\mathbb{P}}
\newcommand{\R}{\mathbb{R}}
\newcommand{\ZZ}{\mathbb{Z}}
\newcommand{\1}{\mathbb{I}}
\newcommand{\RR}{\mathbb{R}}
\newcommand{\EE}{\mathbb{E}}
\DeclareMathOperator{\PRM}{PRM}
\DeclareMathOperator{\MDA}{MDA}
\begin{document}

\title{On extremes of random clusters and marked renewal cluster processes} 

\authorone[University of Zagreb]{Bojan Basrak} 
\addressone{Department of Mathematics, University of Zagreb, Bijeni\v{c}ka 30, Zagreb, Croatia} 
\authortwo[University of Zagreb]{Nikolina Milin\v{c}evi\'c} 
\addresstwo{Department of Mathematics, University of Zagreb, Bijeni\v{c}ka 30, Zagreb, Croatia} 
\authorthree[University of Zagreb]{Petra \v{Z}ugec} 
\addressthree{Faculty of Organization and Informatics, Pavlinska 2, Vara\v{z}din, University of Zagreb, Croatia} 

\begin{abstract}

The article describes the limiting distribution of the extremes of observations that arrive in clusters. We start by studying the tail behaviour of an individual cluster and then we apply the developed theory to determine the limiting distribution of $\max\{X_j: j=0,\ldots, K(t)\}$, where  $K(t)$ is the number of i.i.d. observations $(X_j)$ arriving up to the time $t$ according to a general marked renewal cluster process. The results are illustrated in the context of some commonly used Poisson cluster models such as the marked Hawkes process.

\end{abstract}

\keywords{Renewal cluster processes; Poisson cluster processes; Hawkes process; maximal claim size; extreme value distributions; random maxima; limit theorems} 

\ams{60G70}{60G55; 91G30} 

\section{Introduction}\label{sec:intro} 

In many real life situations one encounters observations  which tend to cluster when collected over time. This behaviour is commonly seen in various applied fields including, for instance, 
non-life insurance, 
climatology 
and hydrology (e.g. \cite{mikosch}, \cite{vogel et al} and \cite{towe et al}).
This article aims to describe the limiting distribution for the extremes of such  observations over increasing time intervals.

In Section~\ref{sec:RM} we study a simpler question concerning the tail behaviour of the maximum in one random cluster of observations. More precisely, consider
$$ H = \bigvee_ {j=1}^K X_j\,,$$
where we assume that  i.i.d. sequence $(X_j)$  belongs to the maximum domain of attraction of some extreme value distribution $G$, or $\MDA(G)$ for short, and $K$ is a positive random integer, possibly dependent on the observations themselves. For  an introduction to $\MDA$'s and extreme value theory in general we refer to \cite{res87},  \cite{embrechtsetal}   or \cite{dehaan}.  In the case of non-random $K$, $H$ belongs to the same $\MDA$ as $X_1$ by the standard extreme value theory. The case of $K$ independent of the sequence $(X_j)$ has been subject of several studies including \cite{JesMik} and \cite{tillier-wint}, see also \cite{fayetal} where the tail behaviour of the randomly indexed sums are studied in a similar setting. The same problem in a multidimensional setting is recently considered in \cite{hashorva}. In the sequel, we allow for instance for $K$ to be a stopping time with respect to  the sequence $(X_j)$ and we show that  $H$ remains in the same $\MDA$ as the observations as long as $K$ has a finite mean. This is the content of our main theorem in Section~\ref{sec:RM}. For this result we provide an original and relatively simple proof based on \cite{drago}.

In Section~\ref{sec:GMEM} we consider observations $(X_j)$ which are i.i.d. but arrive in possibly overlapping groups at times $\tau_1,\tau_2,\ldots$. We show how one can determine the asymptotic distribution of $ M(t) = \sup \left\lbrace X_k: \tau_{k} \leq t\right\rbrace$ under certain mild conditions on the clustering among the observations. Due to the results in Section~\ref{sec:RM}, it turns out that the effects of clustering often remain relatively small in the limit, cf.  Corollary \ref{cor:41}. 
Processes of the form: $M(t) = \bigvee_ {j=0}^{K(t)} X_j$, where $K(t)$ is a stochastic process possibly dependent on the observations $X_j$, have	received considerable attention over the years. For some of the earliest contributions see \cite{berman} and \cite{barndorff}.

More recently, \cite{meerschaert} and \cite{pancheva}
studied the convergence of process $(M(t))$ towards an appropriate extremal process. 
For the study of all upper order statistics up to time $K(t)$ see \cite{drago} and for the more general weak convergence of extremal processes with a random sample size see \cite{silvestrov}.

Section~\ref{sec:APP} is dedicated to application of our main results to some  frequently used stochastic models of clustering. In particular, we study variants of Neyman-Scott, Bartlett-Lewis and randomly marked Hawkes process. For each of the three clustering mechanisms we find sufficient conditions which imply that $M(t)$ properly centred and normalized,  roughly speaking, stays in the $\MDA(G)$.

	Throughout, let $\mathbb{S}$ denote a general Polish space and $\BB(\mathbb{S})$ a Borel $\sigma$-algebra on $\mathbb{S}$.  The space of boundedly finite point measures on $ \mathbb{S}$ is denoted by $M_p ( \mathbb{S})$.
For this purpose $\mathbb{S}$ is endowed with a family of the so--called {\em bounded} sets, see \cite{baplSPL}. 
We use the standard vague topology on the space $M_p(\mathbb{S})$ (see~\cite{res87} or \cite{kallenberg2017}).  Recall that  $m_n \stackrel{v}{\longrightarrow} m$ in $M_p( \mathbb{S})$ simply means that  $\int f dm_n \longrightarrow \int f  dm$ for any bounded continuous function $f: \mathbb{S} \to \RR$
whose support is {\em bounded} in the space $\mathbb{S}$.

The Lebesgue measure will be denoted by $\lambda$, whereas the Poisson random measure with mean measure $\eta$ will be denoted by $\PRM(\eta)$.
To simplify the notation, for a generic member of an identically distributed sequence or an array, say $(X_j)$, $(A_{i,j})$, throughout  we write $X,\, A\,$ etc. The set of natural numbers will be denoted by $\NN = \{1,2,\dots\}$. The set of non-negative integers we denote by $\ZZ_+$.

\section{Random maxima}\label{sec:RM} 

Let $(X_j)_{j\in \NN}$ be  an i.i.d. sequence with distribution belonging to $\MDA(G)$ where $G$ is one of the three extreme value distributions, and let $K$ denote a random non-negative integer. We are interested in the tail behaviour of
$$ H = \bigvee_ {j=1}^K X_j\,.$$

In the sequel we allow that $K$ depends on the values of sequence $(X_j)_{j\in \NN}$ together with some additional sources of randomness.
Assume that $((W_j, X_j))_{j\in \NN}$ is a sequence of i.i.d. random elements in $\mathbb{S}\times \RR$. For the filtration $(\mathcal{F}_n)_{n\in \NN} = (\sigma\{(W_j, X_j): j\leq n \})_{n\in \NN}$ we assume that $K$ is a stopping time with respect to $(\mathcal{F}_n)_{n\in \NN}$. In this case already, $H$ can be a rather complicated distribution, as one can see from the following.

\begin{examp}\label{example of stopping times} 
\begin{itemize}
    \item[(a)] Assume $(W_j)_{j\in \NN}$ is independent of $(X_j)_{j\in \NN}$ and integer valued. When $K=W_1$, $H$ has been studied already in the references mentioned in the Introduction.
    \item[(b)] Assume $((W_j, X_j))_{j\in \NN}$ is i.i.d. as before (note that some mutual dependence between $W_j$ and $X_j$ is allowed) and $\PP(X>W)>0$. Let $K=\inf\{k\in \NN: X_k>W_k\}$. Clearly $K$ has geometric distribution and we will show that this implies that $H$ is in the same $\MDA$ as $X$. 
    \item[(c)] Assume $(W_j)_{j\in \NN}$ and $(X_j)_{j\in \NN}$ are two independent i.i.d. sequences. Let $K=\inf \{k\in \NN: X_k > W_1\}$. Clearly $H=X_{K}>W_1$. Therefore, $H$ has a tail at least as heavy as $W$.
\end{itemize}
\end{examp}

Recall (see Chapter $1$ in \cite{res87} by Resnick) that the assumption that $X$ belongs to $\MDA(G)$ is equivalent to the existence of a sequence of positive real numbers $(a_n)_{n\in \NN}$ and a sequence of real numbers $(b_n)_{n\in \NN}$ such that for every $x\in \EE = \{y\in \RR: G(y)>0\}$ 
\begin{align}\label{mda(g)}
    n \cdot \PP(X> a_n \cdot x + b_n)\to -\log G(x) \quad \text{ as } n\to \infty,
\end{align}
and it is further equivalent to 
\begin{equation*}
    \PP\left(\frac{\bigvee_{i=1}^n X_i - b_n}{a_n}\leq x\right)\to G(x) \quad \text{ as } n\toi.
\end{equation*}

We denote by $\mu_G$ the measure $\mu_G (x, \infty) = -\log G(x)$, $x\in \EE$.
Consider  point processes
\begin{equation*}
	N_n = \sum_{i\in \NN} \delta_{\left(\frac{i}{n}, \frac{X_i - b_n}{a_n} \right)}\,,
	\quad n \in \NN\,.
\end{equation*}

It is well known, see again \cite{res87}, that
$X\in\MDA(G)$ is both necessary and sufficient for weak convergence of 
$N_n$ towards a limiting point process, $N$  say, which is a $\mathrm{PRM}(\lambda\times \mu_G)$ in
 $M_p([0,\infty)\times \mathbb{E})$, where both $\mathbb{E}$ and the concept of {\em boundedness} depend on $G$. For instance, in the Gumbel $\MDA$,  $\mathbb{E}=(-\infty,\infty)$ and the sets are considered {\em bounded} in $[0,\infty) \times \mathbb{E}$ if contained in some set of the type 
$[0,T] \times (a,\infty)$, $a \in \RR, T>0 $, cf. \cite{bapl}.

Denote by  $m \restr{A}$ the restriction of a point measure $m$ on a
set $A$, i.e. $m \restr{A}(B) = m(A \cap B).$ Denote by $\EE'$ an arbitrary measurable subset of $\R^d$. The following simple lemma, see Lemma 1 in \cite{drago}, plays an important role in 
a couple of our proofs.
\begin{lemm}\label{drago1}
	Assume that $N$, $(N_t)_{t \geq 0}$ are point processes with values in $M_p([0, \infty)\times \mathbb{E}')$.  
	Assume further that $Z$, $(Z_t)_{t\geq 0}$ are $\R_+$ valued random variables. If $P(N(\{Z\} \times \mathbb{E}') > 0) = 0$ and $(N_t, Z_t) \cid (N, Z),$ in the product topology as $t \toi,$ then 
	\begin{align*}
		N_t \restr{[0,Z_t] \times \mathbb{E}'} \cid N \restr{[0,Z] \times \mathbb{E}'} \quad \text{ as } t\to \infty.
	\end{align*}
\end{lemm}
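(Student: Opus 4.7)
The plan is to recognize the statement as a continuous mapping theorem application. I would introduce the restriction map
$$\Phi\colon M_p([0,\infty) \times \mathbb{E}') \times \R_+ \longrightarrow M_p([0,\infty) \times \mathbb{E}'), \qquad \Phi(m, z) = m\restr{[0,z] \times \mathbb{E}'},$$
so that the conclusion is literally $\Phi(N_t, Z_t) \cid \Phi(N, Z)$. Since the joint convergence $(N_t, Z_t) \cid (N, Z)$ is already in the hypothesis, everything reduces to showing that $\Phi$ is continuous on a set to which the distribution of $(N, Z)$ assigns probability one. The natural candidate is the set of pairs $(m, z)$ with $m(\{z\} \times \mathbb{E}') = 0$, and that is exactly what the assumption $\PP(N(\{Z\} \times \mathbb{E}') > 0) = 0$ guarantees.

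The core step is to verify continuity of $\Phi$ at such a point $(m, z)$. I would fix a sequence $(m_n, z_n) \to (m, z)$ in the product topology and a non-negative bounded continuous test function $f$ on $[0, \infty) \times \mathbb{E}'$ with bounded support (the sign restriction is harmless after splitting $f = f^+ - f^-$), and then establish $\int \one_{[0, z_n]} f\, dm_n \to \int \one_{[0, z]} f\, dm$. To accommodate the jump in the indicator I would sandwich it between continuous cutoffs: for each small $\eps > 0$, pick $g_\eps^\pm\colon [0, \infty) \to [0, 1]$ continuous with $g_\eps^- \equiv 1$ on $[0, z - 2\eps]$ and $g_\eps^- \equiv 0$ on $[z - \eps, \infty)$, and $g_\eps^+ \equiv 1$ on $[0, z + \eps]$ and $g_\eps^+ \equiv 0$ on $[z + 2\eps, \infty)$. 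Once $n$ is large enough that $|z_n - z| < \eps/2$, one checks case by case that $g_\eps^-(t) \le \one_{[0, z_n]}(t) \le g_\eps^+(t)$ for every $t \ge 0$. Since $g_\eps^\pm f$ are continuous with bounded support, vague convergence $m_n \civ m$ yields
$$\int g_\eps^- f\, dm \le \liminf_{n} \int \one_{[0, z_n]} f\, dm_n \le \limsup_{n} \int \one_{[0, z_n]} f\, dm_n \le \int g_\eps^+ f\, dm.$$
Letting $\eps \downarrow 0$, dominated convergence shows that $\int g_\eps^- f\, dm \to \int \one_{[0, z)} f\, dm$ and $\int g_\eps^+ f\, dm \to \int \one_{[0, z]} f\, dm$; the critical no-mass hypothesis $m(\{z\} \times \mathbb{E}') = 0$ makes these two limits equal, so both outer bounds collapse to $\int \one_{[0, z]} f\, dm$, delivering the desired convergence.

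The hard part is essentially only this one point: the indicator $\one_{[0, z_n]}$ is discontinuous at its right endpoint, so vague convergence cannot be applied to it directly. The continuous-cutoff sandwich, supplemented by the no-mass condition at $z$, is the device that navigates around this issue. Once continuity of $\Phi$ on $\{(m,z) : m(\{z\}\times\mathbb{E}') = 0\}$ is in hand, the continuous mapping theorem immediately yields the claim.
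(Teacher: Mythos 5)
Your argument is correct. Note that the paper itself gives no proof of this lemma --- it is quoted verbatim from Lemma~1 of \cite{drago} (Basrak and \v{S}poljari\'c, 2015), so there is no in-paper argument to compare against; the proof in that reference proceeds by Skorokhod's representation theorem, reducing to almost sure joint convergence and then verifying deterministic vague convergence of the restrictions at realizations with $N(\{Z\}\times\mathbb{E}')=0$, which is mathematically the same continuity fact you isolate. Your route packages that fact as continuity of the restriction map $\Phi(m,z)=m\restr{[0,z]\times\mathbb{E}'}$ on the set $\{(m,z):m(\{z\}\times\mathbb{E}')=0\}$ followed by the continuous mapping theorem, and the key step --- the sandwich $g_\eps^-\leq \one_{[0,z_n]}\leq g_\eps^+$ for $|z_n-z|<\eps/2$, combined with the no-atom condition to collapse the bounds --- is carried out correctly; since the vague topology on $M_p([0,\infty)\times\mathbb{E}')$ is metrizable, your sequential verification does establish continuity, and the remaining hypotheses of the mapping theorem (measurability of $\Phi$ and of its discontinuity set, the latter being contained in the complement of the null set above) are standard. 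The two approaches are interchangeable here; the Skorokhod coupling avoids discussing continuity sets explicitly, while your version makes the topological content of the no-mass assumption more transparent and is equally rigorous.
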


Suppose that the stopping time $K$ is almost surely finite. Our analysis of $H$ depends on the following simple observation: since $((W_j, X_{j}))_{j\in \NN}$ is an i.i.d. sequence, by the strong Markov property, after stopping time $K_1 = K$, the sequence $((W_{K_1+j},$ $X_{K_1+j}))_{j\in \NN}$ has the same distribution as the original sequence. Therefore it has its own stopping time $K_2$ distributed as $K_1$ such that $((W_{K_1+K_2+j}, X_{K_1+K_2+j}))_{j\in \NN}$ again has the same distribution. Using the shift operator $\vartheta$, one can also write
$K_2 = K \circ \vartheta^{K_1}(((W_j, X_{j}))_{j})$.
 Applying this argument iteratively, we can break the original sequence into i.i.d. blocks
\begin{gather*}
	((W_{T(l-1)+1}, X_{T(l-1)+1}),\, (W_{T(l-1)+2}, X_{T(l-1)+2}),\, \dots,\, (W_{T(l)}, X_{T(l)}))_{l\in \NN}, \nonumber\\
\mbox{where} \quad T(0)=0, \quad T(n) = K_1+K_2+\dots+K_n.
\end{gather*}
Clearly,
$$ H_l = \bigvee_{j=T(l-1)+1}^{T(l)} X_j\,,\qquad  l \in \NN,$$
are i.i.d. with the same distribution as the original compound maximum $H$.
Assume that $((W_{i,j}, X_{i,j}))_{i,j\in \NN}$ is an i.i.d. array of elements as above and let $(K_i')_{i\in \NN}$ be an i.i.d. sequence of stopping times such that for each $l\in \NN$, $(K_l', (W_{l,j}, X_{l,j})_{j\in \NN})\eind (K, (W_j, X_j)_{j\in \NN})$. Then
$$ H'_l = \bigvee_{j=1}^{K'_l} X_{l,j}$$
are also i.i.d. with the same distribution as $H$.
Before stating the main theorem, we prove a simple lemma.

\begin{lemm}\label{lemma:point:proc}
Assume that $\xi = \EE [K]< \infty$. Then
\begin{align*}
    \sum_{i=1}^n \sum_{j=1}^{K'_i} \delta_{\frac{X_{i,j}-b_{\lfloor n\xi \rfloor}}{a_{\lfloor n\xi \rfloor}}} \cid \PRM(\mu_G) \quad \text{ as } n\to \infty.
\end{align*}
\end{lemm}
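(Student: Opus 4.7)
The plan is to reduce the claim to the standard point-process convergence for i.i.d.\ extremes, and then to pass from a deterministic sample size $\lfloor n\xi\rfloor$ to the actual random sample size $T(n)$ via Lemma~\ref{drago1}. First, I would use the iterated strong Markov decomposition from the paragraph just above the lemma to set up the following coupling: on a single i.i.d.\ sequence $((W_j, X_j))_{j\in\NN}$ one can iteratively define stopping times $K_1, K_2, \ldots$ (each distributed as $K$, with $K_{l+1}$ acting on the shifted sequence after $K_1+\cdots+K_l$), so that with $T(n)=K_1+\cdots+K_n$,
$$\sum_{i=1}^{n} \sum_{j=1}^{K'_i} \delta_{X_{i,j}} \;\eind\; \sum_{j=1}^{T(n)} \delta_{X_j}.$$
The $K_l$ are i.i.d.\ with common mean $\xi$, so by the strong law $Z_n := T(n)/n \cas \xi$; it thus suffices to establish the convergence for the right-hand side.

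Next, I introduce the two-dimensional point process
$$N_n = \sum_{j \in \NN} \delta_{\left(j/n,\; (X_j - b_{\lfloor n\xi\rfloor})/a_{\lfloor n\xi\rfloor}\right)}$$
on $[0,\infty)\times\EE$. A direct intensity computation based on~\eqref{mda(g)} gives $\EE[N_n([s,t]\times(x,\infty))] \to (t-s)\xi^{-1}(-\log G(x))$, and combined with the standard i.i.d.\ criterion for convergence to a Poisson random measure (Proposition~3.21 of \cite{res87}) one obtains $N_n \cid N$ with $N \sim \PRM(\xi^{-1}\lambda\times\mu_G)$. Since $Z_n$ converges in probability to the deterministic value $\xi$, Slutsky yields joint convergence $(N_n, Z_n) \cid (N,\xi)$ in the product topology. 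The intensity of $N$ is diffuse in time, so $\Pr(N(\{\xi\}\times\EE)>0)=0$, and Lemma~\ref{drago1} then delivers
$$N_n\restr{[0,Z_n]\times\EE} \;\cid\; N\restr{[0,\xi]\times\EE} \quad \text{in } M_p([0,\infty)\times\EE).$$

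It remains to project out the time coordinate. For any nonnegative continuous $f$ with bounded support in $\EE$, take a continuous cutoff $g$ on $[0,\infty)$ equal to $1$ on $[0,\xi+1]$ and $0$ on $[\xi+2,\infty)$; then $(t,y)\mapsto g(t) f(y)$ is bounded continuous with bounded support in $[0,\infty)\times\EE$, so the preceding display yields convergence of $\int g(t) f(y)\, N_n\restr{[0,Z_n]\times\EE}(dt,dy)$ to $\int g(t) f(y)\, N\restr{[0,\xi]\times\EE}(dt,dy)$. The right-hand side equals $\int f\,d\widetilde{N}$, where $\widetilde{N}$ is the time-projection of $N\restr{[0,\xi]\times\EE}$, which is $\PRM(\mu_G)$ since the time component has total mass $\xi\cdot\xi^{-1}=1$; the left-hand side differs from $\sum_{j=1}^{T(n)} f\bigl((X_j-b_{\lfloor n\xi\rfloor})/a_{\lfloor n\xi\rfloor}\bigr)$ only on the event $\{Z_n > \xi+1\}$, which has vanishing probability. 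This furnishes the desired Laplace-functional convergence.

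The main technical obstacle is the projection step: because the time truncation $[0,Z_n]$ is random, test functions independent of time cannot be used directly on $N_n\restr{[0,Z_n]\times\EE}$, and the cutoff argument above is what cleanly turns the two-dimensional restricted convergence into the one-dimensional statement of the lemma. A secondary care-point is the Step~1 coupling, but the block decomposition already spelled out in the paragraph preceding the lemma effectively carries this out in the reverse direction, so the reduction to a single i.i.d.\ sequence is essentially immediate.
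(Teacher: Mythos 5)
Your proof is correct and follows essentially the same route as the paper: the same block/coupling identity $\sum_{i\le n}\sum_{j\le K_i'}\delta_{\cdot}\eind\sum_{j\le T(n)}\delta_{\cdot}$, convergence of the two-dimensional process via Proposition 3.21 of Resnick plus the law of large numbers, Slutsky, and Lemma~\ref{drago1}. The only differences are cosmetic: you scale time by $n$ (so $Z_n\to\xi$ and the limit is $\PRM(\xi^{-1}\lambda\times\mu_G)$) where the paper scales by $n\xi$, and you spell out the time-projection cutoff argument that the paper delegates to Theorem~2 of \cite{drago}.
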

\begin{proof}
    First note that
    $$ \sum_{i=1}^n \sum_{j=1}^{K'_i} \delta_{\frac{X_{i,j}-b_{\lfloor n\xi \rfloor}}{a_{\lfloor n\xi \rfloor}}} \eind \sum_{i=1}^{T(n)} \delta_{\frac{X_i-b_{\lfloor n\xi \rfloor}}{a_{\lfloor n\xi \rfloor}}}.$$
    To use Lemma \ref{drago1}, let $Z=1$, $(Z_n)_{n\in \NN} = (T(n)/(n\xi))_{n\in \NN}$ be an $\R_+$-valued  random variables,  $N=\PRM(\lambda\times \mu_G)$ as before and define point processes $(N_n')_{n\in \NN}$, where
    $$ N_n' = \sum_{i\in \NN} \delta_{\left(\frac{i}{n\xi}, \frac{X_{i}-b_{\lfloor n\xi \rfloor}}{a_{\lfloor n \xi \rfloor}}\right)}$$
    with values in the space 
    $[0, \infty)\times \mathbb{E}$, where the space $\mathbb{E}$ depends on $G$ as before. By the weak law of large numbers and by Proposition 3.21 from \cite{res87}, since $X_1\in \MDA(G)$, we have
    $$ Z_n \cip Z=1\quad \text{and} \quad N_n'\cid N \quad \text{ as } n\to \infty.$$
    Hence, by the standard Slutsky argument (Theorem~3.9 in \cite{billingsley})
    $$ (N_n',\, Z_n) \cid (N,\,Z) \quad \text{ as } n\to \infty.$$
    Note that $ \PP\left(N(\{Z\}\times \mathbb{E} ) >0\right) = 0$, so by Lemma \ref{drago1}, 
	\begin{align*}
	N_n'\Big|_{[0, Z_n]\times \mathbb{E}} \cid N\Big|_{[0, Z]\times \mathbb{E}}. 
	\end{align*}
	Conclude that 
	\begin{align*}
	N_n'\Big|_{\left[0, \frac{T(n)}{n\xi}\right]\times\mathbb{E}} ([0,\infty)\times \cdot\,) &= \sum_{i=1}^{T(n)} \delta_{\frac{X_i-b_{\lfloor n\xi \rfloor}}{a_{\lfloor n\xi \rfloor}}}(\, \cdot \,)  \cid N\Big|_{[0,1]\times \mathbb{E}}([0,\infty)\times \cdot\,)  \quad \text{ as } n\to \infty,
	\end{align*}
	where the point process on the right is a $\PRM(\mu_G)$, see Theorem 2 in \cite{drago} for details.
\end{proof}

\begin{theo}\label{lem:max2}
	Assume that  $K$ is a stopping time with respect to the filtration $(\mathcal{F}_j)_{j\in \NN}$ with a finite mean. 
 
	If $X$  belongs to  $\MDA(G)$, then the same holds for $H=\bigvee_ {j=1}^K X_j$.
\end{theo}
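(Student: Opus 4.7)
The plan is to derive the $\MDA(G)$ characterisation \eqref{mda(g)} for $H$ as a direct consequence of Lemma \ref{lemma:point:proc}. Since $\xi = \EE[K] < \infty$, the stopping time $K$ is almost surely finite, so the iterative block decomposition constructed just before Lemma \ref{lemma:point:proc} produces a genuine i.i.d.\ array of random variables $H'_1,\,H'_2,\ldots$, each distributed as $H$, satisfying
\[
\bigvee_{i=1}^n H'_i \;=\; \bigvee_{i=1}^n \bigvee_{j=1}^{K'_i} X_{i,j}.
\]

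Next, I would apply the functional $m \mapsto m\bigl((x,\infty)\bigr)$ to the point process appearing in Lemma \ref{lemma:point:proc}, for arbitrary $x \in \mathbb{E}$. For any such $x$ the set $(x,\infty)$ is bounded in $\mathbb{E}$, one has $\mu_G((x,\infty)) = -\log G(x) < \infty$, and the limiting $\PRM(\mu_G)$ almost surely has no atom at $\{x\}$ and only finitely many points in $(x,\infty)$; hence this projection is a.s.\ continuous at the limit and the continuous mapping theorem applies. Since the left-hand side below is exactly the probability that the point process of Lemma \ref{lemma:point:proc} has no atom in $(x,\infty)$, one obtains
\[
\PP\Bigl(\bigvee_{i=1}^n H'_i \leq a_{\lfloor n\xi\rfloor} x + b_{\lfloor n\xi\rfloor}\Bigr) \;\longrightarrow\; e^{-\mu_G((x,\infty))} \;=\; G(x).
\]

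Finally, because $H'_1,\ldots,H'_n$ are i.i.d.\ copies of $H$, the left-hand side equals $F_H\bigl(a_{\lfloor n\xi\rfloor} x + b_{\lfloor n\xi\rfloor}\bigr)^n$, and the elementary asymptotic $-\log(1-u) \sim u$ as $u\to 0$ upgrades the above display to
\[
n\cdot \PP\bigl(H > a_{\lfloor n\xi\rfloor} x + b_{\lfloor n\xi\rfloor}\bigr) \;\longrightarrow\; -\log G(x), \qquad x \in \mathbb{E}.
\]
Setting $a'_n := a_{\lfloor n\xi\rfloor}$ and $b'_n := b_{\lfloor n\xi\rfloor}$, this is exactly condition \eqref{mda(g)} for $H$, so $H \in \MDA(G)$. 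The only delicate point is the continuous mapping step, but this reduces to the standard observation that $\{x\}$ is not an atom of $\PRM(\mu_G)$ for fixed $x \in \mathbb{E}$; everything else in the argument is routine.
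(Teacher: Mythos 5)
Your proposal is correct and follows essentially the same approach as the paper's own proof: both reduce the problem to the point process convergence in Lemma \ref{lemma:point:proc}, identify the event $\{\bigvee_{i=1}^n H'_i \leq a_{\lfloor n\xi\rfloor}x + b_{\lfloor n\xi\rfloor}\}$ with the void event $\{\sum_{i,j}\delta_{(X_{i,j}-b_{\lfloor n\xi\rfloor})/a_{\lfloor n\xi\rfloor}}(x,\infty)=0\}$, and pass to the limit $\PP(\PRM(\mu_G)(x,\infty)=0)=G(x)$. You merely spell out two steps the paper leaves implicit — the continuous-mapping justification for convergence of void probabilities and the elementary conversion from $F_H(\cdot)^n \to G(x)$ to the tail characterisation \eqref{mda(g)} — but the underlying argument and the role of Lemma \ref{lemma:point:proc} are identical.
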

\begin{proof}

    For $(H_i)$ i.i.d. copies of $H$, using Lemma \ref{lemma:point:proc} and the notation therein,
\begin{align*}
    \PP\left(\frac{\bigvee_{i=1}^n H_i - b_{\lfloor n \xi\rfloor}}{a_{\lfloor n\xi \rfloor}}\leq x\right) &= \PP \left(  \sum_{i=1}^n \sum_{j=1}^{K'_i} \delta_{\frac{X_{i,j}-b_{\lfloor n\xi \rfloor}}{a_{\lfloor n\xi \rfloor}}} (x, \infty) = 0 \right) \\
    &\to \PP\left(\PRM(\mu_G)(x,\infty)=0\right) = G(x).
\end{align*}
\end{proof}

\begin{customexample}{2.1 (continued)}
Provided $\EE [W] < \infty$, we recover known results for example (a). Since $\EE [K]< \infty$, in the case (b)  $H$ belongs to the same $\MDA$ as $X$. As we have seen, the case (c) is more involved, but the theorem implies that  if $W_1$ has a heavier tail index than $X$, then $\EE [K] =\infty$ and $H \not \in \MDA(G)$. On the other hand, for bounded or lighter tailed $W$,  we can still have $H  \in \MDA(G)$. 
\end{customexample}

\section{Limiting behaviour of the maximal claim size in the marked renewal cluster model}\label{sec:GMEM}

To describe the marked renewal cluster model consider first an independently marked renewal process $N^0$. Let $(Y_k)_{ k\in \NN }$ be a sequence of i.i.d. non-negative inter-arrival times in $N^0$ and $(A_k)_{ k\in \NN }$ be i.i.d. marks independent of $(Y_k)_{ k\in \NN }$ with distribution $Q$ on $(\mathbb{S}, \BB(\mathbb{S}))$. Throughout we assume that 
\begin{equation*}
0<\EE [Y] = \frac{1}{\nu} < \infty.
\end{equation*} 
If we denote by $(\Gamma_i)_{i\in \NN}$ the sequence of partial sums of  $(Y_k)_{ k\in \NN}$, the process $N^0$ on the space $[0,\infty) \times \mathbb{S}$\, has the representation
$$
N^0 = \dsum_{i\in \NN} \delta_{ \Gamma_i,A_i}\,.
$$
The processes of this type appear in a non--life insurance mathematics where marks are often referred to as claims. They can represent the size of the claim, type of the claim, severity of the accident, etc.

Assume that at each time $\Gamma_i$ with mark $A_i$ another
point process in $M_p([0,\infty)\times \mathbb{S}) $, denoted by $G_i$, is generated. All $G_i$'s are mutually independent and intuitively represent clusters of points {superimposed} on $N^0$ after time $ \taui$. Formally, there exists a probability kernel $K$ from $\mathbb{S}$ to $M_p([0,\infty)\times \mathbb{S})$ such that, conditionally on $N^0$, the point processes $G_i$ are independent, a.s. finite and with the distribution equal to  $K(A_i,\cdot)$. Note that this permits dependence between  $G_i$ and $A_i$. 

In this setting, the process $N^0$ is usually called the parent process, while $G_i$ are called the descendant processes. We can write
$$
G_i = \dsum_{j= 1}^{K_i}  \delta_{T_{i,j},A_{i,j}}\,,
$$ 
where ${(T_{i,j})}_{j\in \NN}$ is a sequence of non-negative random variables and $K_i$ is an $\ZZ_+ $--valued random variable. If we count the original point arriving at time $\taui$, the actual cluster size is
$K_i+1$.

Throughout, we also assume that the cluster processes $G_i$ are independently marked with the same mark distribution $Q$ independent of $A_i$, so that all the marks $A_{i,j}$ are i.i.d. Note that $K_i$ may possibly depend on $A_i$. We assume throughout that 
\begin{equation*} \label{Kfin}
\Exp [K_i] <\infty\,.
\end{equation*}
Finally,  to describe 
the size and other characteristics of all the observations (claims) together with their  arrival times, we
use a marked point process   $N$ as a random element in $M_p([0,\infty)\times\mathbb{S})$ of the
form
\begin{equation}\label{e:PoisProc}
N= \dsum_{i=1}^\infty  \dsum_{j = 0}^{K_i} \delta_{ \taui+T_{i,j},A_{i,j}}\,,
\end{equation}
where we set $T_{i,0} = 0$ and $A_{i,0} = A_i$. In this representation, the claims arriving at time $ \taui$ and corresponding to the index $j=0$ are called ancestral or  immigrant claims, while the claims arriving at times $ \taui+T_{i,j},\ j \in \NN$, are referred to as progeny or offspring. 
Note that $N$ is a.s. boundedly finite because $\Gamma_i \toi$ as $i \toi$ and $K_i$ is a.s. finite for every $i$, so one could also write 
\begin{equation}\label{nocluster}
N = \dsum_{k=1}^\infty \delta_{\tau_k,A^k} \,,
\end{equation}
with  $\tau_k\leq \tau_{k+1}$ for all $k \in \NN$ and $A^k$ being i.i.d. marks which are in general not independent of the arrival times $(\tau_k)$. {Observe that this representation ignores the information regarding the clusters of the point process.} Note also that eventual ties turn out to be irrelevant  asymptotically.

In the special case, when the inter-arrival times are exponential with parameter $\nu,$ the renewal counting process which generates the arrival times in the parent process is a homogeneous Poisson process. Associated marked renewal cluster model is then called marked Poisson cluster process, see \cite{daley}, cf. \cite{mi}.

\begin{rema}
	In all our considerations we take into account the original immigrant claims arriving at times $\taui$ as well. One could of course ignore these claims and treat  $\taui$  as times of incidents that trigger, with a possible delay, a cluster of subsequent payments as in the model of the so called incurred but not reported (IBNR) claims, cf. \cite{mikosch}.

\end{rema}

The numerical observations, i.e. the size of the claims,
are produced by an application of a measurable function on the marks, say 
$f:\mathbb{S} \to \mathbb{R}_+$. 
The maximum of all claims due to the arrival of an immigrant claim at time $ \taui$
equals
\begin{equation} \label{e:Ei}
H_i = \bigvee_{j=0}^{K_i} X_{i,j}, 
\end{equation}
where $X_{i,j} = f(A_{i,j})$ are i.i.d. random variables for all $i$ and $j$. 
The random variable $H_i$ has an interpretation as the maximal claim size coming from the $i$th immigrant and its progeny. 
If we denote $f(A^k)$ by $X^k$, the maximal claim size in
the period $[0,t]$ can be represented as 
$$
M(t) = \sup \left\lbrace X^k: \tau_{k} \leq t\right\rbrace \,.
$$
In order to bring the model in the context of Theorem \ref{lem:max2}, observe that one can let $W_k = A^k$, for $k\in \NN$.
Introduce the first passage time process $(\tau(t))_{ t\geq 0 }$ defined by
\begin{equation*}
\tau(t)  = \inf \left\{ n : \taun > t \right\},\ t \geq 0\,.
\end{equation*}

This means that $\tau(t)$ is the renewal counting process generated by the sequence $(Y_n)_{ n\in \NN}.$ 
According to the strong law for counting processes (Theorem 5.1. in \cite{gut}, Chapter 2), for every $c \geq 0,$
\begin{align*}
\frac{\tau (tc)}{\nu t} \cas c \quad \text{ as } t\to \infty.
\end{align*}
Denote by 
$$M^{\tau}(t) = \bigvee_{i=1}^{\tau(t)} H_i, $$
the maximal claim size coming from the maximal claim sizes in the first $\tau(t)$ clusters.
Now we can write 
\begin{equation} \label{e:Moft} 
M^{\tau}(t) = M(t) \bigvee H_{\tau(t)} \bigvee  \vep_t\,,\quad t \geq 0\,,
\end{equation} 
where the last error term represents the leftover effect at time $t$, i.e.  the maximum of all claims arriving after $t$ which correspond to the progeny of immigrants arriving before time $t$, more precisely
\begin{align*}
\vep_t =  \max \{X_{i,j}: 0\leq \taui \leq t , \, t < \taui+T_{i,j}\},    \quad t \geq 0\,.
\end{align*}
Denote the number of members in the set above by 
\begin{equation}\label{eq:jt}
J_t=\# \{(i,j):0\le \taui \leq t , \, t < \taui + T_{i,j}\}.
\end{equation}

We study the limiting behaviour of the maximal claim size $M(t)$ up to time $t$ and aim to  find sufficient conditions under which $M(t)$ converges in distribution to a non-trivial limit after appropriate centering and normalization.

Recall, $H$ belongs to $\MDA(G)$ if there exist constants $c_n>0,$ $d_n \in \RR$ such that for each $x\in \EE=\{y\in \R: G(y)>0\}$,

\begin{equation}\label{MDA2}
n \cdot \PP \left( H > c_{n}x + d_{n} \right)  \rightarrow - \log G(x) \quad \text{ as } n\to \infty.
\end{equation}
An application of Lemma \ref{drago1} yields the following result.

\begin{propo}\label{prop:21}
	Assume that $ H$ belongs to  $\MDA(G)$ so that \eqref{MDA2} holds and that the error term in \eqref{eq:jt} satisfies 
	\begin{equation*}
	J_t
	= o_P(t).
	\end{equation*}
	Then
	\begin{equation}\label{MDA}
	\frac{M(t) 
		- d_{\lfloor \nu t \rfloor}
	}{ 
		c_{\lfloor \nu t \rfloor} }
	\cid G \quad \text{ as } t\to \infty.
	\end{equation}
	
\end{propo}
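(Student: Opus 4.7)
The strategy is to reduce the problem to the surrogate maximum $M^\tau(t) = \bigvee_{i=1}^{\tau(t)} H_i$ and exploit the decomposition $M^\tau(t) = M(t) \vee H_{\tau(t)} \vee \vep_t$ from \eqref{e:Moft}. I would first establish that $M^\tau(t)$, normalised by $c_{\lfloor \nu t\rfloor}$ and $d_{\lfloor \nu t\rfloor}$, converges weakly to $G$, and then show that both excess terms $H_{\tau(t)}$ and $\vep_t$ are asymptotically negligible at this scale, so that $M(t)$ inherits the same limit.

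For the first step, $(H_i)_{i\in\NN}$ is an i.i.d. sequence in $\MDA(G)$, and the renewal counting process $\tau(t)$, being a function of $(Y_k)$ alone, is independent of $(H_i)$; moreover $\tau(t)/(\nu t) \cas 1$ by the strong renewal law. Writing $n = \lfloor \nu t\rfloor$, classical point process convergence gives $N_t' := \sum_{i\in\NN}\delta_{(i/n,\,(H_i-d_n)/c_n)} \cid \PRM(\lambda\times \mu_G)$ on $[0,\infty)\times\EE$, and the proof of Lemma \ref{lemma:point:proc} goes through verbatim with $Z_t = \tau(t)/n \cip 1$ in place of $T(n)/(n\xi)$ and $(H_i,c_n,d_n)$ in place of $(X_i,a_n,b_n)$. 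Evaluating the resulting convergence of $\sum_{i=1}^{\tau(t)}\delta_{(H_i-d_n)/c_n}$ on $(x,\infty)$ yields
$$\PP\bigl(M^\tau(t)\leq c_n x + d_n\bigr) = \PP\Bigl(\sum_{i=1}^{\tau(t)}\delta_{(H_i-d_n)/c_n}(x,\infty) = 0\Bigr) \longrightarrow G(x)$$
for every $x\in \EE$.

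For the negligibility, fix $x\in \EE$ and set $u_t = c_n x + d_n$. Since $\tau(t)$ is independent of $(H_i)$, conditioning on $\tau(t)$ gives $H_{\tau(t)}\eind H$, so $\PP(H_{\tau(t)} > u_t) = \PP(H > u_t) \to 0$ by \eqref{MDA2}. For $\vep_t$, I would condition on the $\sigma$-field $\mathcal{G} = \sigma\{\Gamma_i, A_i, K_i, T_{i,j}:i,j\in\NN\}$; the index set determining $\vep_t$ and its size $J_t$ are $\mathcal{G}$-measurable, whereas the offspring marks $(A_{i,j})_{j\geq 1}$ are i.i.d.\ and independent of $\mathcal{G}$, so a union bound gives $\PP(\vep_t > u_t\mid \mathcal{G}) \leq J_t\,(1-F_X(u_t))$. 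The almost sure inequality $H \geq X_{i,0}$ yields the stochastic domination $1-F_X \leq 1-F_H$, hence for any $\eps > 0$,
$$\PP(\vep_t > u_t) \leq \eps n\,(1-F_H(u_t)) + \PP(J_t > \eps n) \longrightarrow \eps\cdot(-\log G(x)),$$
by \eqref{MDA2} and $J_t = o_P(t)$; letting $\eps\downarrow 0$ gives $\PP(\vep_t > u_t)\to 0$.

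Finally, $M(t) \leq M^\tau(t)$ together with $\{M^\tau(t) > u_t,\, M(t) \leq u_t\} \subseteq \{H_{\tau(t)} > u_t\}\cup\{\vep_t > u_t\}$ implies
$$\bigl|\PP(M(t)\leq u_t) - \PP(M^\tau(t)\leq u_t)\bigr| \leq \PP(H_{\tau(t)} > u_t) + \PP(\vep_t > u_t) \longrightarrow 0,$$
which combined with the first step establishes \eqref{MDA}. The main obstacle lies in bounding $\vep_t$: the hypothesis is on $H\in\MDA(G)$ rather than on $X$, so the stochastic domination $H\geq X$ is the key mechanism translating the $\MDA(G)$ tail of $H$ into a Markov-type bound on $X$, and the truncation on $\{J_t \leq \eps n\}$ then absorbs the randomness of $J_t$ via the $J_t = o_P(t)$ hypothesis.
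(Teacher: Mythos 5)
Your proof follows the same approach as the paper's: the decomposition $M^\tau(t) = M(t) \vee H_{\tau(t)} \vee \vep_t$ from \eqref{e:Moft}, the point-process argument via Lemma~\ref{drago1} with $\tau(t)/\lfloor\nu t\rfloor \cip 1$ for the convergence of $M^\tau(t)$, the identity $H_{\tau(t)} \eind H$ from the independence of $\tau(t)$ and $(H_i)$, and a truncation of $J_t$ at level $\eps\lfloor\nu t\rfloor$ to dispose of $\vep_t$. Your treatment of $\vep_t$ is in fact slightly more explicit than the paper's, since you invoke the stochastic domination $X \leq H$ (hence $1-F_X \leq 1-F_H$) to translate the hypothesis $H\in\MDA(G)$ into the required bound $t\,\PP(X>u_t)=O(1)$, a step the paper leaves implicit when asserting that its bound vanishes as $\delta\to 0$.
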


\begin{proof}
    Using the equation \eqref{e:Moft},
	$$\frac{M^{\tau}(t) - d_{\lfloor \nu t \rfloor}}{c_{\lfloor \nu t \rfloor}} 
	= \frac{M(t) - d_{\lfloor \nu t \rfloor}}{c_{\lfloor \nu t \rfloor}} \bigvee \frac{H_{\tau(t)} - d_{\lfloor \nu t \rfloor}}{c_{\lfloor \nu t \rfloor}} \bigvee \frac{\vep_t - d_{\lfloor \nu t \rfloor}}{c_{\lfloor \nu t \rfloor}}.$$
	Since for $x\in \EE$, 
	
	\begin{align*}
	0 &\leq \PP \left( \frac{M^{\tau}(t) - d_{\lfloor \nu t \rfloor}}{c_{\lfloor \nu t \rfloor}} > x \right)  - \PP \left( \frac{M(t) - d_{\lfloor \nu t \rfloor}}{c_{\lfloor \nu t \rfloor}} > x \right)\\ &\leq \PP \left( \frac{H_{\tau(t)} - d_{\lfloor \nu t \rfloor}}{c_{\lfloor \nu t \rfloor}} > x \right) + \PP \left( \frac{\vep_t - d_{\lfloor \nu t \rfloor}}{c_{\lfloor \nu t \rfloor}} > x\right)
	\end{align*}
	it suffices to show that 
	\begin{equation}\label{eq:RVmax}
	\frac{M^{\tau}(t) - d_{\lfloor \nu t \rfloor}}{c_{\lfloor \nu t \rfloor}} \cid G \quad \text{ as } t\to \infty,
	\end{equation}
	\begin{equation}\label{hi}
	\lim_{t \toi} \PP \left( \frac{H_{\tau(t)} - d_{\lfloor \nu t \rfloor}}{c_{\lfloor \nu t \rfloor}} > x \right) = 0, \quad \text{and} \quad \lim_{t \toi} \PP \left( \frac{\vep_t - d_{\lfloor \nu t \rfloor}}{c_{\lfloor \nu t \rfloor}} > x\right) = 0.
	\end{equation}

	Recall that $H_i$ represents the maximum of all claims due to the arrival of an immigrant claim at time $ \taui$ and by \eqref{e:Ei}, it equals
	$$
	H_i =\bigvee_{i=0}^{K_i} X_{i,j}  \,.
	$$
	Note that $(H_i)$ is an i.i.d. sequence because the ancestral mark in every cluster comes from an independently marked renewal point process.

	As in the proofs of Lemma \ref{lemma:point:proc} and Theorem \ref{lem:max2},
	\begin{align*}
	\PP\left(\frac{M^{\tau}(t) - d_{\lfloor \nu t\rfloor}}{c_{\lfloor \nu t \rfloor}}\leq x\right) &= \PP \left(  \sum_{i=1}^{\tau(t)} \delta_{\frac{H_{i}-d_{\lfloor \nu t\rfloor}}{c_{\lfloor \nu t \rfloor}}} (x, \infty) = 0 \right) \\
    &\to \PP\left(\PRM(\mu_G)(x,\infty)=0\right) = G(x),
    \end{align*}
    as $t\toi$, which shows \eqref{eq:RVmax}.

	To show \eqref{hi}, note that $\left\lbrace \tau(t) = k \right\rbrace \in \sigma(Y_1, \dots Y_k)$ and by assumption $\left\lbrace H_{k} \in A \right\rbrace$ is independent of $\sigma(Y_1, \dots Y_k)$ for every $k$. Therefore, $ H_{\tau(t)} \eind H_1\in \MDA(G)$ so the first part of \eqref{hi} easily follows from \eqref{MDA2}. For the second part of \eqref{hi}, observe that the leftover effect $\vep _t$ 
	admits the following representation 
	$$\vep_t \eind \bigvee_{i=1}^{J_t}X_i,$$	
	for $(X_i)_{i\in \NN}$ i.i.d. copies of $X=f(A)$. 
	Hence, $$\frac{\vep _t - d_{\lfloor \nu t \rfloor}}{c_{\lfloor \nu t \rfloor}} \eind \frac{\bigvee_{i=1}^{J_t}X_i - d_{\lfloor \nu t \rfloor}}{c_{\lfloor \nu t \rfloor}}.$$

	Since $J_t = o_P(t)$, for every fixed $\delta>0$ and $t$ large enough, $\PP(J_t>\delta t)< \delta.$
	For measurable $A=\{J_t>\delta t\}$ we have
	\begin{align*}
	    \PP\left(\dfrac{\bigvee_{i=1}^{J_t}X_i - d_{\lfloor \nu t \rfloor}}{c_{\lfloor \nu t \rfloor}}  > x\right) &\leq \PP(A) + \PP \left( \left\lbrace  \dfrac{\bigvee_{i=1}^{J_t}X_i - d_{\lfloor \nu t \rfloor}}{c_{\lfloor \nu t \rfloor}}  > x \right\rbrace \cap A^C \right)\\
	    &< \delta + \PP\left(\dfrac{\bigvee_{i=1}^{\delta t}X_i - d_{\lfloor \nu t \rfloor}}{c_{\lfloor \nu t \rfloor}}  > x\right)
	\end{align*}
	which converges to $0$, as $\delta\to 0$. 
\end{proof}

As we have seen above, it is relatively easy to determine asymptotic behaviour of the maximal claim size $M(t)$ as long as one can determine tail properties of the random variables $H_i$ and the number of points in the leftover effect at time $t,$ $J_t$ in \eqref{eq:jt}.
An application of  Theorem~\ref{lem:max2}, immediately yields the following corollary.
\begin{coro}\label{cor:41}
	Let $J_t = o_P(t)$ and let $(X_{i,j})$ satisfy \eqref{mda(g)} and the assumptions from the proof of Theorem \ref{lem:max2}. Then \eqref{MDA} holds with $(c_n)$ and $(d_n)$ defined by
	\begin{equation}\label{norm:sequences}
	    (c_n) = (a_{\lfloor (\EE [K] +1) \cdot n \rfloor}), \quad (d_n) = (b_{\lfloor (\EE [K]+1) \cdot n \rfloor}) .
	\end{equation}

\end{coro}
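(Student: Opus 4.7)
The plan is to combine Theorem \ref{lem:max2} (applied to the per-cluster maxima $H_i$) with Proposition \ref{prop:21}. Since Proposition \ref{prop:21} already reduces the asymptotics of $M(t)$ to the $\MDA$ behaviour of $H_i$ together with the hypothesis $J_t = o_P(t)$, the only thing left to do is to identify the explicit normalizing and centering sequences for $H_i$.

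First I would look at the structure of a single cluster maximum
$$H_i = \bigvee_{j=0}^{K_i} X_{i,j},$$
noting that this is a maximum of $K_i + 1$ i.i.d. copies of $X$ (the ancestor $X_{i,0}$ plus the $K_i$ offspring), rather than $K_i$. Re-indexing to start from $j=1$, write $H_i = \bigvee_{j=1}^{K_i'} \tilde X_{i,j}$ with $K_i' = K_i + 1$. Then $K_i'$ is a stopping time with respect to the natural filtration generated by $(W_{i,j},\tilde X_{i,j})_{j}$ (the ancestral mark is simply incorporated as the first coordinate), and $\Exp[K_i'] = \Exp[K] + 1 < \infty$ by the standing assumption. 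Hence the hypotheses of Theorem \ref{lem:max2} are satisfied with $\xi' := \Exp[K']=\Exp[K]+1$ in place of $\xi$.

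Next I would invoke Lemma \ref{lemma:point:proc} (which is the quantitative statement underlying Theorem \ref{lem:max2}) with the stopping time $K'$ and its mean $\xi'$ to obtain
$$
\PP\!\left(\frac{\bigvee_{i=1}^n H_i - b_{\lfloor n \xi'\rfloor}}{a_{\lfloor n\xi'\rfloor}}\le x\right) \longrightarrow G(x),\qquad n\to\infty.
$$
This shows that $H$ belongs to $\MDA(G)$ with normalizing sequences
$$c_n = a_{\lfloor (\Exp[K]+1)\,n\rfloor},\qquad d_n = b_{\lfloor (\Exp[K]+1)\,n\rfloor},$$
which are exactly the sequences appearing in \eqref{norm:sequences}; in other words, \eqref{MDA2} holds with these choices of $c_n,d_n$.

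Finally, since $J_t = o_P(t)$ is assumed, Proposition \ref{prop:21} applies with this $(c_n,d_n)$ and yields \eqref{MDA}, completing the argument. The only subtle point is the bookkeeping $K_i \rightsquigarrow K_i + 1$ which accounts for the immigrant claim $X_{i,0}$; once this is handled, no new convergence argument is needed beyond Lemma \ref{lemma:point:proc} and Proposition \ref{prop:21}, so I expect no real obstacle here.
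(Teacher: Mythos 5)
Your proof is correct and follows essentially the same route as the paper, which states the corollary as an immediate consequence of Theorem \ref{lem:max2} (i.e.\ of Lemma \ref{lemma:point:proc}) combined with Proposition \ref{prop:21}. The only detail the paper leaves implicit---the passage from $K_i$ to the actual cluster size $K_i+1$, which produces the factor $\EE[K]+1$ in \eqref{norm:sequences}---is exactly the bookkeeping you carry out.
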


As we shall see in the following section, showing that $J_t = o_P(t)$ holds  remains a rather technical task. However, this can be done for several frequently used cluster models.

\section{Maximal claim size for three special models}\label{sec:APP}

 In this section we present three special models belonging to the general marked renewal cluster model introduced in Section 3. We try to find sufficient conditions for those models in order to apply Proposition \ref{prop:21}.

\begin{rema}\label{stac}
	In any of the three examples below, the point process $N$
	can be made stationary if we start the construction in \eqref{e:PoisProc}  on the state space
	$\RR \times \mathbb{S}$ with a renewal process $ \sum_i \delta_{\taui}$ on the whole real line. For the resulting stationary cluster process we use the notation $N^*$. Still, from the applied perspective, it seems more interesting 
	to study the nonstationary version where both the parent process $N^0$ and the cluster process itself have arrivals only from some point onwards, e.g. in the interval $[0,\infty)$.
	
\end{rema}

\subsection{Mixed binomial cluster model}
Assume that the individual clusters  have the following form
$$
G_i = \dsum_{j=1}^{K_i} \delta_{V_{i,j}, A_{i,j}}\,.
$$

Assume that $(K_i,(V_{i,j})_{j\in \NN},(A_{i,j})_{j\in \ZZ_+})_{i\in \NN}$ constitutes an i.i.d. sequence with the following properties for fixed $i\in \NN$
\begin{itemize} 
	\item $(A_{i,j})_{j\in \ZZ_+}$ are i.i.d.,
	\item $(V_{i,j})_{j\in \NN}$ are conditionally i.i.d. given $A_{i,0}$,
	\item $(A_{i,j})_{j\in \NN}$ are independent of $(V_{i,j})_{j\in \NN}$,
	\item $K_i$ is a stopping time with respect to the filtration generated by the $(A_{i,j})_{j\in \ZZ_+},$ i.e. for every $k \in \ZZ_+$,
	$\{ K_i = k\} \in \sigma (A_{i,0}, \dots A_{i,k}).$
	
\end{itemize}

Notice that we do allow possible dependence between $K_i$ and ${(A_{i,j})}_{j\in \ZZ_+}$. Also, we do not exclude the possibility of dependence between $(V_{i,j})_{j\in \NN}$ and the ancestral mark $A_{i,0}$ (and consequently $K_i$).
Recall, $K$ is an integer valued random variable representing the size of a cluster such that $\EE[K]<\infty$. Observe that we use notation $V_{i,j}$ instead of $T_{i,j}$ to emphasize a relatively simple structure of clusters in this model in contrast with the  other two models in this section. Such a process $N$ is a marked version of the so--called Neyman--Scott  process, e.g. see {Example 6.3 (a) in \cite{daley}}.

\begin{coro} \label{cor:maxmix}
Assume that $f(A)=X$ belongs to $\MDA(G)$ so that \eqref{mda(g)} holds. Then \eqref{MDA} holds for $(c_n)$ and $(d_n)$ defined in \eqref{norm:sequences}.
\end{coro}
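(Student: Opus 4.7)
\textbf{The plan} is to invoke Corollary~\ref{cor:41}, which reduces the assertion to verifying two ingredients: (i) the cluster $(K_i,(X_{i,j}))$ fits the i.i.d.\ stopping-time framework of Theorem~\ref{lem:max2} with $X=f(A)\in\MDA(G)$, and (ii) the leftover count $J_t$ is $o_P(t)$. Ingredient~(i) is built into the Neyman--Scott setup: the marks $(A_{i,j})_{j\in\ZZ_+}$ are i.i.d.\ by hypothesis, so the observations $(X_{i,j})=(f(A_{i,j}))$ are i.i.d.\ and belong to $\MDA(G)$; taking $W_{i,j}=A_{i,j}$ and the filtration $\sigma(A_{i,0},\ldots,A_{i,j})$, the cluster size $K_i$ is a stopping time by hypothesis and has $\EE[K_i]<\infty$. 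Thus the hypotheses of Theorem~\ref{lem:max2} hold within each cluster.

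\textbf{The main technical step} is therefore the bound $J_t=o_P(t)$, which I would establish at the level of first moments. Using the independence between the parent arrivals $(\Gamma_i)$ and the cluster data $(A_{i,0},K_i,(V_{i,j})_j)$ and writing
\begin{equation*}
    J_t=\sum_{i\in\NN}\1\{\Gamma_i\le t\}\sum_{j=1}^{K_i}\1\{V_{i,j}>t-\Gamma_i\},
\end{equation*}
conditioning on $\Gamma_i$ in each summand yields
\begin{equation*}
    \EE[J_t]=\int_0^t g(t-s)\,dU(s),\qquad g(u):=\EE\Bigl[\sum_{j=1}^{K_i}\1\{V_{i,j}>u\}\Bigr],
\end{equation*}
where $U(s)=\sum_{i\in\NN}\PP(\Gamma_i\le s)$ is the renewal measure of the parent process. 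Two features of $g$ carry the argument: $g$ is non-increasing and bounded by $\EE[K_i]<\infty$, and $g(u)\to 0$ as $u\to\infty$ by dominated convergence (each $V_{i,j}$ is a.s.\ finite and the sum is dominated by $K_i$). Splitting the renewal integral at $t-R$ and applying the elementary renewal theorem $U(t)/t\to\nu$ gives
\begin{equation*}
    \limsup_{t\to\infty}\frac{\EE[J_t]}{t}\le \nu\,g(R)+\EE[K_i]\cdot\lim_{t\to\infty}\frac{U(t)-U(t-R)}{t}=\nu\,g(R),
\end{equation*}
for every $R>0$; letting $R\to\infty$ forces $\EE[J_t]=o(t)$, and Markov's inequality upgrades this to $J_t=o_P(t)$.

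\textbf{Conclusion and obstacle.} Having verified both hypotheses, Corollary~\ref{cor:41} delivers \eqref{MDA} with the norming sequences in \eqref{norm:sequences}. The main point requiring care is the intra-cluster dependence: the ancestor $A_{i,0}$ simultaneously governs the conditional law of $(V_{i,j})_j$ and, through the stopping-time hypothesis, the distribution of $K_i$, so $g(u)$ is not a product of $\EE[K_i]$ with the tail of $V$. This turns out not to be a genuine obstacle, however, because the argument only uses the two scalar features of $g$ (uniform boundedness and pointwise decay), both of which follow from $\EE[K_i]<\infty$ and the a.s.\ finiteness of each $V_{i,j}$, without any unpacking of the joint law inside a cluster.
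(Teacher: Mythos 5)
Your proposal is correct, and it follows the paper's overall strategy: reduce to a first--moment bound $\Exp[J_t]=o(t)$ via Markov's inequality, express $\Exp[J_t]$ as a renewal integral of $g(u)=\Exp\bigl[\sum_{j=1}^{K}\1\{V_{j}>u\}\bigr]$, and then use $g(u)\to0$ together with a renewal-type averaging argument. The one place where your treatment genuinely departs from the paper's is in handling the renewal integral. The paper writes
$\Exp[J_t]=\int_0^t g(t-s)\,\nu\,ds$ by citing a Campbell-type lemma and calculations from the Poisson-cluster reference \cite{mi}, and then closes with a Ces\`aro argument against Lebesgue measure; that identity with intensity $\nu\,ds$ is exact only when the parent process is Poisson. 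You instead keep $\Exp[J_t]=\int_0^t g(t-s)\,dU(s)$ with the renewal measure $U$, exploit that $g$ is bounded and non-increasing, split the integral at $t-R$, and invoke the elementary renewal theorem $U(t)/t\to\nu$ to send first $t\to\infty$ and then $R\to\infty$. This is a slightly more careful argument that is valid for a general renewal parent process, which is the generality in which the corollary is actually stated, whereas the paper's displayed computation is really the Poisson specialisation. Everything else (identifying the cluster data as an i.i.d.\ stopping-time framework for Theorem~\ref{lem:max2}, the index shift $K\mapsto K+1$ accounting for the ancestral mark in \eqref{norm:sequences}, and packaging the conclusion through Corollary~\ref{cor:41}) matches the paper.
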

\begin{proof}
	Using 
	Theorem \ref{lem:max2} we conclude that the maximum $H$ of all claims in a cluster belongs to the $\MDA$ of the same distribution as $X$. 
	
	Apply Proposition \ref{prop:21} after observing that $J_t=o_P(t)$.

	Using the Markov's inequality, it is enough to check that $\EE[J_t]=o(t)$,

	\begin{align*}
	    \EE [J_t]&=\EE [\# \{(i,j):0\le \taui \leq t ,\, t< \taui + V_{i,j}\}] \\
	    &= \EE\left[\sum_{0\le \taui \leq t }\sum_{j=1}^{K_i}\1_{t\le  \taui +V_{i,j}}\right].
	\end{align*}
	Using Lemma 7.2.12. in \cite{mikosch} and calculation similar as in the proofs of Corollaries 5.1. and 5.3. in \cite{mi}, we have

	\begin{align*}
	    \EE\left[\sum_{0\le \Gamma_i\le t}\sum_{j=1}^{K_i}\1_{t< \Gamma_i+V_{i,j}}\right] &= \int_0^t\EE\left[\sum_{j=1}^{K_i}\1_{V_{i,j}> t-s} \right]\nu ds = \int_0^t\EE\left[\sum_{j=1}^{K_i}\1_{V_{i,j}> x} \right]\nu dx.
	\end{align*}

	Now note that as $x \toi$, by the dominated convergence theorem,
	$$\EE\left[\sum_{j=1}^{K_i}\1_{V_{i,j}> x} \right] \to 0 \,.$$

	An application of Ces\`aro argument yields now that
	$\EE[J_t]/t \to 0$.
\end{proof}

\subsection{Renewal cluster model} 

Assume next that the clusters  
$G_i$ 
have the following  distribution
$$
G_i = \dsum_{j=1}^{K_i} \delta_{T_{i,j}, A_{i,j}}\,,
$$
where 
$(T_{i,j})$ 
represents the sequence such that
$$
T_{i,j} = V_{i,1} + \cdots +  V_{i,j},\quad 1\leq j\leq K_i\,.
$$
We keep all the other assumptions from the model in the previous subsection. 

A general unmarked model of the similar type is called Bartlett–-Lewis
model and is analysed in \cite{daley}, see Example 6.3 (b).
See also \cite{fayetal} for an application of a similar point process to modelling of teletraffic data.
By adapting the arguments from the Corollary \ref{cor:maxmix} we can easily obtain the next Corollary.
\begin{coro} \label{cor:maxren}
	Assume that $f(A)=$ $X$ belongs to $\MDA(G)$ so that \eqref{mda(g)} holds. Then \eqref{MDA} holds for $(c_n)$ and $(d_n)$ defined in \eqref{norm:sequences}.
\end{coro}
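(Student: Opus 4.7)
The plan is to follow the two-step strategy used for Corollary \ref{cor:maxmix}: first invoke Theorem \ref{lem:max2} to conclude that each cluster maximum $H_i=\bigvee_{j=0}^{K_i}X_{i,j}$ lies in $\MDA(G)$ with the normalizing constants \eqref{norm:sequences}, and then apply Proposition \ref{prop:21} after verifying that $J_t=o_P(t)$. The marks $(A_{i,j})_{j\in\ZZ_+}$ within a cluster are i.i.d., $K_i$ is a stopping time with respect to the filtration they generate, and $\EE[K_i]<\infty$; Theorem \ref{lem:max2} therefore applies (with the effective cluster size $K_i+1$ giving $\xi=\EE[K]+1$) and delivers the first step exactly as in the Neyman--Scott case.

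The only structural change arises in the second step, because the offspring arrival times $T_{i,j}=V_{i,1}+\cdots+V_{i,j}$ are now partial sums rather than conditionally i.i.d.\ copies of a single $V$. By Markov's inequality it suffices to check $\EE[J_t]=o(t)$. Repeating the Palm-type computation from the proof of Corollary \ref{cor:maxmix}, based on Lemma 7.2.12 in \cite{mikosch} but with $T_{i,j}$ in place of $V_{i,j}$, yields
\begin{equation*}
\EE[J_t]\;=\;\nu\int_0^t\EE\Big[\sum_{j=1}^{K_i}\1_{T_{i,j}>x}\Big]\,dx\;=\;\nu\int_0^t\sum_{j=1}^{\infty}\PP(K_i\geq j,\,T_{i,j}>x)\,dx.
\end{equation*}
For each fixed $j$, $\PP(K_i\geq j,\,T_{i,j}>x)\to 0$ as $x\to\infty$ since $T_{i,j}$ is a.s.\ finite, and every summand is dominated uniformly in $x$ by $\PP(K_i\geq j)$, which is summable because $\sum_j\PP(K_i\geq j)=\EE[K_i]<\infty$. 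Dominated convergence then forces the integrand to vanish as $x\to\infty$, and a Ces\`aro argument gives $\EE[J_t]/t\to 0$. Proposition \ref{prop:21} now produces \eqref{MDA}.

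The main obstacle, albeit a mild one, is identifying a summable dominating sequence for the inner sum when $T_{i,j}$ is a growing partial sum rather than a single offspring delay; the observation $\1_{T_{i,j}>x}\leq \1_{K_i\geq j}$ together with $\EE[K_i]<\infty$ handles this cleanly, and everything else in the argument transfers verbatim from the proof of Corollary \ref{cor:maxmix}.
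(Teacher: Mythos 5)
Your proof is correct and carries out exactly the adaptation the paper alludes to when it states that this corollary follows ``by adapting the arguments from Corollary \ref{cor:maxmix}'': the first step (via Theorem \ref{lem:max2}) is unchanged, and the only modification is replacing $V_{i,j}$ by the partial sum $T_{i,j}$ in the bound for $\EE[J_t]$, which you handle correctly by the same domination (since $\sum_j \1_{j\le K_i}\1_{T_{i,j}>x}\le K_i$, equivalently $\sum_j\PP(K_i\ge j)<\infty$) and a Ces\`aro argument. One tiny notational slip: the stated bound should be $\1_{j\le K_i}\1_{T_{i,j}>x}\le\1_{K_i\ge j}$ rather than $\1_{T_{i,j}>x}\le\1_{K_i\ge j}$, but the intended domination is the right one.
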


\subsection{Marked Hawkes processes}

One of the example in our analysis is the so called (linear) marked Hawkes process. 
They are typically introduced through their stochastic intensity (see, for example \cite{zhu} or \cite{daley}). More precisely, a  point process $N = \sum_{k} \delta_{\tau_k,A^k} \,,$ represents a Hawkes process of this type if the random marks $(A^k)$ are i.i.d. with distribution $Q$ on the space $\mathbb{S}$, while the arrivals $(\tau_k)$ have the stochastic intensity of the form 
$$\lambda(t) = \nu + \sum_{\tau_i< t} h(t-\tau_i,A^i)\,,$$
where $\nu > 0$ is a constant and  $h:[0,\infty)\times \mathbb{S}\to \mathbb{R}_+$ is assumed to be integrable  in the sense that
$\int_0^\infty \EE [h(s,A)] ds < \infty$.

On the other hand, Hawkes processes of this type have a neat Poisson cluster representation due to \cite{HaOa}. For this model,
the clusters $G_{i}$ are recursive aggregation of Cox processes, i.e.
Poisson processes with random mean measure $ \tilde{\mu}_{A_i}  \times Q$ where
$  \tilde{\mu}_{A_i} $ has the following form
\begin{align*}
\tilde{\mu}_{A_i} (B) = \dint_B h(s, A_i) ds\,,
\end{align*}
for some  fertility (or self--exciting) function $h$, cf. {Example 6.4 (c) of \cite{daley}}. 
It is useful to
introduce a time shift operator $\theta_t$, by denoting  
$$
\theta _t  m =  \sum_j \delta_{t_j+t,a_j}\,,
$$ for an arbitrary point measure $m = \sum_j \delta_{t_j,a_j} \in M_p([0,\infty)\times \mathbb{S})$ and $t\geq 0$.
Now, for the parent process $N^0 = \sum_{i\in \NN} \delta_{ \taui,A_i}\,$ which is a Poisson point process with mean measure $\nu \times Q$ on the space
$[0,\infty) \times \mathbb{S}$\,,
the cluster process corresponding to a point $( \Gamma_i,A_i)$
satisfies the following recursive relation
\begin{equation} \label{GA}
G_i =  \dsum_{l=1}^{{L_{A_i}}}  \left(  {\delta_{\tau^1_l,A^1_l} +} \theta_{\tau^1_l} G^1_l \right)\,,
\end{equation}
where, given $A_i,$ ${\tilde{N}}_i = \sum_{l=1}^{L_{A_i}} \delta_{\tau^1_l,A^1_l}$ is a Poisson process with mean measure 
$ \tilde{\mu}_{A_i}  \times Q,$
the sequence $(G^1_l)_l$ is i.i.d., distributed as $G_i$ and independent of ${\tilde{N}}_i$. 

Thus, at any ancestral point $(\Gamma_i,A_i)$  a cluster of points appears as a whole cascade of points to the right in time generated recursively according to \eqref{GA}.  Note that $L_{A_i}$ has Poisson distribution conditionally on $A_i$, with mean $\kappa_{A_i}=\int_0^\infty h(s,A_i) ds$. It corresponds to the number of the first generation progeny $(A^1_l)$ in the cascade. Note also that the point processes forming the second generation are again Poisson conditionally on the corresponding first generation  mark $A_l^1$. The cascade $G_i$ corresponds to the process formed by the successive generations, drawn recursively as Poisson processes given the former generation.
The marked Hawkes process is obtained by attaching to the ancestors $(\taui,A_{i})$ of the marked Poisson process $
N^0 = \sum_{i\in \NN} \delta_{ \taui,A_i}
$ a cluster of points, denoted by $C_i$, which contains point $(0,A_{i})$  and a whole cascade $G_i$ of points to the right in time generated recursively according to \eqref{GA} given $A_i$.
Under the assumption
\begin{equation} \label{e:kappa}
\kappa =  \Exp \left[\dint h(s,A) ds\right] < 1\,,
\end{equation}
the total number of points in a cluster is generated by a subcritical branching process. Therefore, 
the clusters are finite  almost surely. Denote their size by
$K_i {+1}$. It is known (see Example 6.3.(c) in \cite{daley}) that under \eqref{e:kappa} the clusters always satisfy
\begin{equation}\label{e:expecK}
\Exp [K_i] {+1} = \frac{1}{1-\kappa}\,.
\end{equation}
Note that the clusters $C_i$, i.e. point processes which represent a cluster together with the mark $A_i$ are independent by construction. They can be represented as
\begin{align*}
C_i= \dsum_{j = 0}^{K_i} \delta_{\taui+T_{i,j},A_{i,j}}\,,
\end{align*}
with $A_{i,j}$ being i.i.d., $A_{i,0}=A_i$ $T_{i,0} = 0$ and $T_{i,j},$  $j\in \NN$, representing arrival times of progeny claims in the cluster $C_i$. Observe that in the case when marks do not influence conditional density, i.e. when $h(s,a) = h(s)$, the random variable
$K_i{+1}$ has a so-called Borel distribution with parameter $\kappa$, see \cite{borel}.
Notice also that  in general, marks and arrival times of the final Hawkes process $N$  are not independent  of each other, rather, in the terminology of \cite{daley}, the marks in the process $N$ are only unpredictable.

As before, the maximal claim size in one cluster is of the form
$$H \stackrel{d}{=} \bigvee_{j=0}^K X_j .$$

Note that $K$ and $(X_j)$ are not independent. 
In this case, due to the representation of Hawkes processes as the recursive aggregation of Cox processes \eqref{GA}, maximal claim size can also be written as 
$$H \stackrel{d}{=} X \vee \bigvee_{j=1}^{L_A} H_j\,.$$

Recall from (\ref{e:kappa}) that $\kappa = \Exp [\kappa_A] < 1.$ The $H_j$'s on the right hand side are independent of $\kappa_A$ and i.i.d. with  the same distribution as $H$. Conditionally on $A$ the waiting times are i.i.d.  with common density 
	\begin{equation}\label{densi}
	\frac{h(t,A)}{\kappa_A}\,, \quad t\ge0\,,
	\end{equation}
	see \cite{zhu} or \cite{mi}. 
In order to apply Proposition \ref{prop:21} first we show that $H$ is in the $\MDA(G)$ using the well known connection between branching processes and random walks, see for instance \cite{asmussen_foss}, \cite{ben00} 
or quite recently \cite{costa20}. This is the subject of the next Lemma. 

\begin{lemm}\label{gwbt}
	Let $X$ belongs to $\MDA(G)$ in the marked Hawkes model. Then $H$ also belongs to the same $\MDA(G).$
\end{lemm}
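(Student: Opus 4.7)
The plan is to reduce Lemma \ref{gwbt} to Theorem \ref{lem:max2} by enumerating the cluster of the marked Hawkes process via a depth-first exploration of its Galton--Watson genealogy, and then observing that the total cluster size is a stopping time of finite mean.

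First, I would set up a Harris-type encoding of the cluster. Since $\kappa<1$, the branching mechanism is subcritical and the cluster of an immigrant is a.s.\ finite. Enumerate its members in the order in which they are discovered by a depth-first traversal rooted at the immigrant: let $\tilde A_j$ be the mark of the $j$-th visited individual and $\tilde L_j = L_{\tilde A_j}$ its number of children (conditionally Poisson with mean $\kappa_{\tilde A_j}$). By the branching property, since all marks are i.i.d.\ $Q$-distributed and, conditionally on the marks, the offspring counts are independent Poisson variables, the triples $((\tilde A_j, \tilde L_j, \tilde X_j))_{j\in\NN}$ with $\tilde X_j = f(\tilde A_j)$ form an i.i.d.\ sequence; in particular $(\tilde X_j)$ are i.i.d.\ copies of $X$. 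Setting $W_j = (\tilde A_j, \tilde L_j)$ places us in the framework of Theorem \ref{lem:max2}.

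Second, I would identify $K+1$ with the first passage time
\[
K+1 \;=\; \inf\Bigl\{n \in \NN : \sum_{j=1}^n (\tilde L_j - 1) = -1\Bigr\},
\]
which is a stopping time with respect to $(\sigma\{(W_j,\tilde X_j) : j \le n\})_{n\in\NN}$: the depth-first walk exhausts the tree precisely when the stack of pending descendants first drops to zero. Its mean equals $1/(1-\kappa)<\infty$ either by \eqref{e:expecK} or directly by Wald's identity, since $\EE[\tilde L_j - 1] = \kappa - 1 < 0$. With this enumeration, $H \stackrel{d}{=} \bigvee_{j=1}^{K+1} \tilde X_j$, so Theorem \ref{lem:max2} applied with stopping time $K+1$ yields $H \in \MDA(G)$, as required.

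The main obstacle is the first step: one must justify carefully that depth-first exploration produces a genuine i.i.d.\ sequence of (mark, offspring count) pairs with the prescribed marginals. This is the classical Harris/Ulam--Neveu encoding of a Galton--Watson tree, and amounts to checking that at each exploration step the newly revealed individual's mark is independent of the past and drawn from $Q$, while its offspring count is Poisson with the right mean conditionally on that mark. Once this encoding is in place, the remainder reduces to a direct invocation of Theorem \ref{lem:max2} together with elementary subcritical-branching identities.
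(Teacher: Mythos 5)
Your proof is correct and follows essentially the same route as the paper: both encode the subcritical Galton--Watson cluster via a depth-first (Harris) exploration, identify the cluster size $K+1$ with the first hitting time of level $0$ of the associated random walk $S_n = 1 + \sum_{j\le n}(L_j-1)$, check that this is a stopping time of finite mean, and then invoke Theorem~\ref{lem:max2}. The only difference is one of emphasis: you spell out more explicitly that the depth-first enumeration yields a genuinely i.i.d.\ sequence of (mark, offspring-count, claim) triples, a point the paper largely delegates to the cited literature on the random-walk representation of Galton--Watson trees.
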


\begin{proof}	
Due to the recursive relation \eqref{GA}, each cluster can be associated with a subcritical branching process (Bienaym\'e--Galton--Watson tree) where 

the total number of points in a cascade (cluster) corresponds to the total number of vertices in such a 
tree. 
It has the same distribution as the first hitting time of level $0$
$$
\zeta = \inf \left\lbrace k: S_k = 0 \right\rbrace, 
$$ 
by a random walk $(S_n)$ defined as
\begin{align*}
S_0 = 1,\quad S_n = S_{n-1} + L_n - 1\,,
\end{align*}
with i.i.d. $L_n \stackrel{d}{=} L.$ Notice, $(S_n)$ has negative drift which leads to conclusion that $\zeta$ is a proper random variable. 

Moreover, since $\EE [L] < 1$ an application of Theorem 3 from \cite{gut} gives $\EE [\zeta] < \infty$ and that we can use \eqref{e:expecK} since $\zeta = K+1.$

If we write, for arbitrary $k\in \NN$,
\begin{align*}
\left\lbrace \zeta = k \right\rbrace &= \left\lbrace S_0 >0, S_1 >0, \dots, S_{k-1} >0, S_k=0 \right\rbrace \\
&=\left\lbrace 1>0, L_1>0, \dots, \dsum_{i=1}^{k-1} L_i - (k-2) >0, \dsum_{i=1}^{k} L_i - (k-1) =0 \right\rbrace \\
&\in \sigma \left( L,A_0,A_1,\dots, A_k \right),      
\end{align*}
we see that $\zeta$ is a stopping time with respect to $(\mathcal{F'}_j)_{j\in \ZZ_+},$ where  $\mathcal{F'}_j = \sigma(L, A_0, A_1, \dots,\\$ $A_j),$ and where $L$ has conditionally Poisson distribution with random parameter $\kappa_A$ and is independent of the sequence $(A_j)_{j\in \ZZ_+}.$ 
By the Theorem \ref{lem:max2} we conclude that $H$ is also in the $\MDA(G)$.
\end{proof}

\begin{rema}
    The equation \eqref{e:expecK} implies that the sequences $(c_n)$ and $(d_n)$ in the following corollary have the representation:
	$(c_n) = (a_{\lfloor \frac{1}{1-\kappa} n \rfloor})$ and $(d_n) = (b_{\lfloor \frac{1}{1-\kappa} n \rfloor})$.
\end{rema} 

\begin{coro} \label{thm:hmax1}
	Assume that $X$ belongs to $\MDA(G)$ so that \eqref{mda(g)} holds and 
	\begin{align*}
	\EE\left[ \tilde{\mu}_A(t,\infty)\right] \to 0 \quad \text{ as } t\to \infty.
	\end{align*}	 
	Then \eqref{MDA} holds for $(c_n)$ and $(d_n)$ defined in \eqref{norm:sequences}.
\end{coro}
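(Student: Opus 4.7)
The plan is to verify the two hypotheses of Proposition \ref{prop:21}, at which point \eqref{MDA} follows immediately with $(c_n),(d_n)$ given by \eqref{norm:sequences}. The requirement $H\in\MDA(G)$ is exactly Lemma \ref{gwbt}, so the substantive work is to establish $J_t=o_P(t)$; by Markov's inequality this reduces to showing $\EE[J_t]=o(t)$.

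Since the parent process $N^0$ is a homogeneous Poisson process of rate $\nu$ and the clusters $(G_i)$ are i.i.d.\ and independent of the ancestral arrival times, the Campbell-type identity used in the proof of Corollary \ref{cor:maxmix} gives
\begin{equation*}
\EE[J_t]=\nu\int_0^t g(x)\,dx,\qquad g(x):=\EE\Big[\sum_{j=1}^{K}\1_{T_j>x}\Big].
\end{equation*}
A Ces\`aro argument then reduces the whole corollary to the single assertion $g(x)\to 0$ as $x\to\infty$.

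To obtain $g(x)\to 0$ I would exploit the recursive Poisson-cluster representation \eqref{GA}. Conditioning on the ancestral mark $A$, the first-generation process $\tilde N_i=\sum_l\delta_{\tau^1_l,A^1_l}$ is Poisson with intensity $h(\cdot,A)$. A first-generation point at time $u\le x$ contributes in expectation $g(x-u)$ through its own sub-cluster, while a first-generation point at time $u>x$ contributes itself together with its whole sub-cluster, i.e.\ $1+\EE[K]=1/(1-\kappa)$ in expectation by \eqref{e:expecK}. Taking expectations yields the defective renewal equation
\begin{equation*}
g(x)=\frac{\EE[\tilde\mu_A(x,\infty)]}{1-\kappa}+\int_0^x g(x-u)\,\EE[h(u,A)]\,du.
\end{equation*}
Iterating gives $g=\sum_{n\ge 0}\varphi\ast\psi^{\ast n}$, where $\varphi(x)=\EE[\tilde\mu_A(x,\infty)]/(1-\kappa)$ and $\psi(u)=\EE[h(u,A)]$ satisfies $\int_0^\infty\psi=\EE[\kappa_A]=\kappa<1$. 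Since $\varphi$ is bounded by $\kappa/(1-\kappa)$ and tends to $0$ by the standing hypothesis, each term $(\varphi\ast\psi^{\ast n})(x)$ vanishes as $x\to\infty$ while being dominated by $\|\varphi\|_\infty\kappa^n$; dominated convergence in $n$ then delivers $g(x)\to 0$.

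The main obstacle is precisely this last step: propagating the tail decay of $\tilde\mu_A$ through arbitrarily many generations of the branching cascade. The subcriticality $\kappa<1$ is essential here, as it renders the renewal equation defective and makes the convolution series geometrically summable, so that the pointwise decay of $\varphi$ transfers to $g$. Once $g(x)\to 0$ is in hand, an application of Proposition \ref{prop:21} closes the proof.
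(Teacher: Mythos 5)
Your proposal is correct, but it takes a genuinely different route from the paper. You reduce everything to the single-cluster quantity $g(x)=\EE\big[\sum_{j=1}^{K}\1_{T_j>x}\big]$ via Campbell's formula for the Poisson immigrant process (legitimate here, since the cascades are i.i.d.\ and independent of the ancestral arrival times), and then prove $g(x)\to 0$ by deriving the defective renewal equation $g(x)=\frac{1}{1-\kappa}\EE[\tilde{\mu}_A(x,\infty)]+\int_0^x g(x-u)\,\EE[h(u,A)]\,du$ from the recursive representation \eqref{GA}, solving it by the convolution series and using $\kappa<1$ for geometric summability; a Ces\`aro argument then gives $\EE[J_t]=o(t)$. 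The paper instead works directly with the full Hawkes process through its stochastic intensity: it rewrites $\EE[J_t]$ as an integral against $N(ds,da)$, applies the projection theorem of Br\'emaud to replace $N$ by its intensity $\lambda(s)$, dominates by the stationary intensity with $\EE[\lambda^*(s)]=\nu/(1-\kappa)$, and finishes with L'H\^opital applied to $\int_0^t\EE[\tilde{\mu}_A(s,\infty)]\,ds$. Your argument is more elementary and self-contained (no compensator/projection machinery, no stationary version $N^*$), at the cost of the renewal-equation bookkeeping; the paper's argument is shorter given that machinery and yields the explicit bound $\EE[J_t]\le \frac{\nu}{(1-\kappa)^2}\int_0^t \EE[\tilde{\mu}_A(s,\infty)]\,ds$. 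Both hinge on the same two ingredients you isolate: subcriticality \eqref{e:kappa} together with \eqref{e:expecK}, and the decay of $\EE[\tilde{\mu}_A(t,\infty)]$. The only step you should make explicit is why the convolution series is \emph{the} solution of your renewal equation: note that $g$ is bounded by $\EE[K]=\kappa/(1-\kappa)$, and a bounded solution of a defective renewal equation is unique, so $g=\sum_{n\ge0}\varphi\ast\psi^{\ast n}$ as claimed; with that remark added, your proof is complete.
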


\begin{proof}
	Recall from \eqref{nocluster} that one can write
		$$
		N= \dsum_{i=1}^\infty  \dsum_{j = 0}^{K_i} \delta_{ \taui+T_{i,j},A_{i,j}} = \dsum_{k=1}^\infty \delta_{\tau_k,A^{k}} \,,
		$$
		w.l.o.g. assuming that $0\leq \tau_1\leq \tau_2\leq \ldots$.
		At each time $\tau_j$, a claim arrives  generated by one of the previous claims or
		an entirely new (immigrant) claim  appears.
		In the former case, if $\tau_j$ is a direct offspring of a claim at time $\tau_i$, we will write
		$\tau_i\to \tau_j$. Progeny $\tau_j$ then potentially creates further  claims. Notice that 
		$\tau_i\to \tau_j$ is equivalent to $\tau_j=\tau_i+V_{i,k}$, $k\le L^i = L_{A^i}$ where $V_{i,k}$ are waiting times which, according to the discussion above \eqref{densi}, are i.i.d. with common density $h(t,A^i)/ \kappa_{A^i}$, $t\ge0$ and independent of $L^{i}$ conditionally on the mark $A^i$ of the claim at $\tau_i$. 
		Moreover,
		conditionally on $A^i$, the number of direct progeny of the claim at $\tau_i$, denoted  by $L^{i}$, has Poisson distribution with parameter $\tilde{\mu}_{A^i}$.
		We denote by $K_{\tau_j}$ the total number of points generated by the arrival at $\tau_j$. Clearly, $K_{\tau_j}$'s are identically distributed as $K$ and even mutually independent if we consider only points which are not offspring of one another.
	
	It is enough to check $\EE[J_t]/t=o(1),$ and see that

	\begin{align*}
	\EE [J_t] &=  \EE\Big[ \dsum_{\taui \leq t}\dsum_j \1_{\taui + T_{i,j} >t}  \Big]\\
	&= \EE\Big[\dsum_{\tau_i \leq t} \dsum_{\tau_j > t}  (K_{\tau_j}+1) \, \1_{\tau_i\to \tau_j}\Big]\\
	&=\EE\Big[\dsum_{\tau_i \leq t} \EE\Big[\dsum_{k=1}^{L^{i}} (K_{\tau_i+V_{i,k}}+1) \1_{\tau_i+V_{i,k} > t}  \mid (\tau_i,A^i)_{i\ge 0}; \tau_i \le t\Big]\Big]\\
	&=\frac{1}{1-\kappa} \EE  \left[ \dint_{0}^t \dint_{\mathbb{S}} \tilde{\mu}_a((t-s,\infty)) N(ds,da)  \right],
	\end{align*}
	where $ \tilde{\mu}_{a} ((u,\infty)) = \int_u^\infty h(s, a) ds$.
	Observe that  from the
	projection theorem, see \cite{brem}, Chapter 8, Theorem 3, the last expression equals
	\[
	\frac{1}{1-\kappa} \EE\left[\dint_0^t \dint_{\mathbb{S}} \tilde{\mu}_a((t-s,\infty)) Q(da) \lambda(s)ds   \right].
	\]
	
	Recall from the Remark \ref{stac} that $N$ has a stationary version, $N^*$, such that the expression  $\Exp  \left[ \lambda^*(s)   \right]$ is a constant equal to $ \nu/(1-\kappa)$.
	Using Fubini's theorem, one can further bound from above the last expectation by
	\begin{align*}
	\EE\left[\dint_0^t \dint_{\mathbb{S}} \tilde{\mu}_a((t-s,\infty)) Q(da) \lambda^*(s)ds\right]&=\dint_0^t \dint_{\mathbb{S}} \tilde{\mu}_a((t-s,\infty)) Q(da) \EE[\lambda^*(s)]ds\\
	&=\frac{\nu}{1-\kappa}\dint_0^t \dint_{\mathbb{S}} \tilde{\mu}_a((t-s,\infty)) Q(da) ds.
	\end{align*}

	Now, we have 
	\begin{align*}
	\Exp {J_t} \leq \dfrac{\nu  }{(1-\kappa)^{2}}
	\dint_0^t \dint_{\mathbb{S}} \tilde{\mu}_a((t-s,\infty)) Q(da)ds
	= \dfrac{\nu  }{(1-\kappa)^{2}}
	\dint_0^t \int_s^\infty \Exp[h(u,A)]du  ds \,.
	\end{align*}
	
	Dividing the last expression  by $t$ and applying L'H\^opital's rule  proves the theorem for the nonstationary or pure Hawkes
	process.
\end{proof}

\section*{Acknowledgments}
We sincerely thank the anonymous reviewers for suggestions which led to simplified proofs and an improved layout of the article.
Bojan Basrak and Nikolina Milinčević are financed in part by the Croatian-Swiss Research Program of the Croatian Science Foundation and the Swiss National Science Foundation
- grant IZHRZ0 - 180549.

\bibliographystyle{plainnat}

\end{document}